\numberwithin{equation}{section}
\newtheorem{theorem}{Theorem}[section]
\newtheorem{lemma}[theorem]{Lemma}
\newtheorem{proposition}[theorem]{Proposition}
\newtheorem{corollary}[theorem]{Corollary}
\theoremstyle{remark}
\providecommand{\customgenericname}{}
\newcommand{\newcustomtheorem}[2]{%
  \newenvironment{#1}[1]
  {%
   \renewcommand\customgenericname{#2}%
   \renewcommand\theinnercustomgeneric{##1}%
   \innercustomgeneric
  }
  {\endinnercustomgeneric}
}
\newtheoremstyle{TheoremNum}
        {\topsep}{\topsep}              
        {\itshape}                      
        {}                              
        {\bfseries}                     
        {.}                             
        { }                             
        {\thmname{#1}\thmnote{ \bfseries #3}}
    \theoremstyle{TheoremNum}
\newcommand{\Spec}{\mathrm{Spec}}
\newcommand{\End}{\mathrm{End}}
\newcommand{\Ee}{\mathcal{E}}
\renewcommand{\H}{\mathrm{H}}
\newcommand{\M}{\mathrm{M}}
\newcommand{\V}{\mathrm{V}}
\newcommand{\Higgs}{\mathrm{Higgs}}
\newcommand{\ol}[1]{\overline{#1}}
\newcommand{\CC}{\mathbb{C}}
\newcommand{\PP}{\mathbb{P}}
\newcommand\Quotient[2]{
        \mathchoice
            {
                \text{\raise1ex\hbox{\thinspace $#1$}\Big{/} \lower1ex\hbox{$#2$} \thinspace}%
            }
            {
                #1\,/\,#2
            }
            {
                #1\,/\,#2
            }
            {
                #1\,/\,#2
            }
    }
\newcommand\GIT[2]{
        \mathchoice
            {
                \text{\raise1ex\hbox{\thinspace $#1$}\Big{/}\!\!\!\!\Big{/} \lower1ex\hbox{$#2$} \thinspace}%
            }
            {
                #1\,/\,#2
            }
            {
                #1\,/\,#2
            }
            {
                #1\,/\,#2
     a       }
    }
\title[]{\bf Very stable bundles\\ and properness of the Hitchin map}
\author{Christian \textsc{Pauly}}
\address{Christian Pauly \\ Laboratoire de Math\'ematiques J.A. Dieudonn\'e \\ UMR  7351 CNRS \\ Universit\'e de Nice Sophia-Antipolis \\ 06108 Nice Cedex 02, France}
\email{pauly@unice.fr}
\author{Ana Pe\'on-Nieto}
\address{Ana Pe\'on-Nieto \\ Laboratoire de Math\'ematiques J.A. Dieudonn\'e \\ UMR  7351 CNRS \\ Universit\'e de Nice Sophia-Antipolis \\ 06108 Nice Cedex 02, France}
\email{ana.peon-nieto@unice.fr}
\thanks{The second author was supported by a
post-doctoral grant associated to the Marie Curie project GEOMODULI of the programme FP7/PEOPLE/2013/CIG, project number 618471.}
\date{\today}
\subjclass[2000]{Primary 14H60, 14H70}
\begin{document}
 
\maketitle

\begin{abstract}
Let $X$ be a smooth complex projective curve of genus $g\geq 2$ and let $K$ be its canonical bundle. 
In this note we show that a stable vector bundle
$E$ on $X$ is very stable, i.e.  $E$ has no non-zero nilpotent Higgs field, if and only if the restriction of the Hitchin
map to the vector space of Higgs fields $H^0(X, \End(E) \otimes K)$ is a proper map.
\end{abstract}

\section{Introduction}

Let $X$ be a smooth complex projective curve of genus $g\geq 2$ and let $K$ be its canonical bundle.  
We consider the moduli space $\M_X(n,d)$ of semi-stable 
degree-$d$ rank-$n$ vector bundles on $X$ and the moduli space $\Higgs_X(n,d)$ of semi-stable degree-$d$ rank-$n$ Higgs bundles. Thanks to a result of N. Nitsure \cite{Nitin} it is
known that the Hitchin fibration
$$ h : \Higgs_X(n,d) \longrightarrow  \H = \bigoplus_{i=1}^n \H^0(X, K^i), $$
defined by associating to a Higgs bundle $(E, \phi)$ the coefficients of the characteristic polynomial of $\phi$,
is a proper map. If $E$ is a stable degree-$d$ rank-$n$ vector bundle, the vector space $\V= \H^0(X,\End(E)\otimes K)$ embeds naturally in the moduli space 
$\Higgs_X(n,d)$. So we can consider the restriction $h_V : \V \longrightarrow \H$ of the Hitchin map $h$ to the vector space $\V$ and ask
whether $h_V$ is also proper.

In order to state the answer we need to consider very stable vector bundles introduced by Drinfeld. By definition a vector bundle $E$ is very stable 
if it has no non-zero nilpotent Higgs field. Laumon \cite[Proposition 3.5]{L} proved, assuming $g \geq 2$, that a very stable vector bundle
is stable and that the locus of very stable bundles is a non-empty open subset of $\M_X(n,d)$.

With these notation our main result is the following

\begin{theorem}
Let $E$ be a stable degree-$d$ rank-$n$ vector bundle over $X$. Then we have the following equivalences:
$$
\begin{array}{lcl}
E \textnormal{ \it is very stable } &\iff& \V \textnormal{ \it is closed in } \Higgs_X(n,d)\\
&\iff& h_V \textnormal{ \it  is a proper map}\\
&\iff& h_V \textnormal{ \it  is a quasi-finite map.}
\end{array}
$$
\end{theorem}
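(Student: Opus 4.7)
My approach is to exploit the natural $\CC^*$-action scaling Higgs fields and to reduce the four conditions to a graded finiteness criterion. Since $E$ is stable, the assignment $\phi \mapsto (E,\phi)$ yields a $\CC^*$-equivariant injection $\V \hookrightarrow \Higgs_X(n,d)$ into the stable locus; here $\CC^*$ scales $\V$ with weight $1$ and acts on $\H = \bigoplus_{i=1}^n \H^0(X, K^i)$ with weight $i$ on the $i$-th summand, so $h_V$ is $\CC^*$-equivariant. Moreover, $h_V^{-1}(0)$ is by definition the scheme of nilpotent Higgs fields on $E$, so $E$ is very stable if and only if $h_V^{-1}(0) = \{0\}$.

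The main technical input is a graded finiteness criterion: since both $\V$ and $\H$ are positively graded under $\CC^*$, graded Nakayama applied to the pullback $h_V^*: \CC[\H] \to \CC[\V]$ shows that $h_V$ is finite if and only if $h_V^{-1}(0)$ is supported at the origin. Indeed, $\CC[\V]$ is a finitely generated $\CC[\H]$-module precisely when the quotient $\CC[\V]/h_V^*(\mathfrak{m})\CC[\V]$ is finite-dimensional, where $\mathfrak{m}$ is the irrelevant ideal of $\CC[\H]$, and in the positively graded setting this is equivalent to the support $h_V^{-1}(0)$ being a single point. Combined with the previous paragraph this yields the chain: very stable $\Rightarrow h_V^{-1}(0) = \{0\} \Rightarrow h_V$ finite $\Rightarrow h_V$ proper and quasi-finite. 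Conversely, if $h_V$ is quasi-finite (respectively proper), then $h_V^{-1}(0)$ is a finite (respectively proper and affine, hence finite) $\CC^*$-invariant subscheme of $\V$ containing the origin; since any nonzero $\CC^*$-orbit in $\V$ is a curve, we conclude $h_V^{-1}(0) = \{0\}$, so $E$ is very stable.

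Finally, the equivalence with ``$\V$ closed in $\Higgs_X(n,d)$'' is handled as follows. If $\V$ is closed then $h_V$ is the restriction of the proper map $h$ to a closed subscheme, hence proper. For the converse, assume $h_V$ is proper. The inclusion $\V \hookrightarrow \Higgs_X(n,d)$ is a monomorphism of schemes because stability of $E$ forces $\Aut(E) = \CC^*$, which acts trivially on $\V$, so distinct Higgs fields on $E$ produce non-isomorphic stable Higgs bundles. Applying the standard cancellation property (if $\V \to \Higgs_X(n,d) \to \H$ is proper and the second map is separated, then the first map is proper) produces a proper monomorphism, which is automatically a closed immersion; hence $\V$ is closed in $\Higgs_X(n,d)$. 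The main obstacle to pin down rigorously is the graded finiteness lemma, together with the verification that $\V \hookrightarrow \Higgs_X(n,d)$ is a genuine monomorphism of schemes; once these are in hand, the remaining equivalences follow formally from $\CC^*$-equivariance and Nitsure's properness theorem for $h$.
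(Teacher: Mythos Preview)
Your argument is correct and takes a genuinely different route from the paper. The paper's hard direction is ``$E$ very stable $\Rightarrow \V$ closed in $\Higgs_X(n,d)$'', which it proves geometrically: assuming there is a limit point $(F,\psi)\in\overline{\V}\setminus\V$, it uses the $\CC^*$-action to produce a rational map from a surface into $\Higgs_X(n,d)$ with one point of indeterminacy, resolves this via Hironaka, and then extracts from the exceptional chain of $\PP^1$'s a nonzero nilpotent Higgs field on $E$. By contrast, you bypass this entirely by observing that $h_V:\V\to\H$ is a graded morphism of positively graded affine spaces, so graded Nakayama gives ``$h_V^{-1}(0)=\{0\}$ $\iff$ $h_V$ finite'' in one stroke. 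This is considerably more elementary, avoids resolution of singularities, and yields the finiteness of $h_V$ (the content of the paper's Corollary~1.2) immediately rather than as a consequence. The remaining implications (finite $\Rightarrow$ proper and quasi-finite; quasi-finite or proper $\Rightarrow h_V^{-1}(0)$ finite $\Rightarrow$ very stable via $\CC^*$-invariance; $\V$ closed $\iff h_V$ proper via cancellation against the separated proper map $h$) are handled the same way in both approaches.

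One small point to tighten: your justification that $\V\hookrightarrow\Higgs_X(n,d)$ is a monomorphism via $\Aut(E)=\CC^*$ only shows injectivity on closed points, which is not quite a scheme-theoretic monomorphism. The cleanest fix is to note, as the paper does implicitly, that $\V$ is the fiber of $T^*\M_X^s(n,d)$ over $[E]$ and $T^*\M_X^s(n,d)$ is open in $\Higgs_X(n,d)$, so $\V\hookrightarrow\Higgs_X(n,d)$ is a locally closed immersion and in particular a monomorphism; then ``proper monomorphism $\Rightarrow$ closed immersion'' applies.
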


A few comments on the proof: the core of the result is to show that for a very stable $E$ the vector 
space $V$ is closed in $\Higgs_X(n,d)$, or equivalently, that if there exists a limit point 
$(F, \psi) \in \Higgs_X(n,d) \setminus V$, then $E$ has a non-zero nilpotent Higgs field. In order to do that 
we use the $\CC^*$-action on the one-dimensional family of Higgs bundles converging to $(F,\psi)$ to contruct 
a rational map from a smooth surface to $\Higgs_X(n,d)$ whose indeterminacy locus is one point. Then, Hironaka's theorem
on the resolution of indeterminacies \cite{H} gives a morphism from the exceptional divisor (a chain of projective lines) to
$\Higgs_X(n,d)$ connecting the two Higgs bundles $(E,0)$ and $(F, \psi)$. 

We have the following
\begin{corollary}
If $E$ is very stable, then the restricted Hitchin map $h_V$ is finite and surjective.
\end{corollary}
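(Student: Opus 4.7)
The plan is to deduce everything from the main theorem together with a dimension count. The theorem already gives, under the very-stability assumption on $E$, that $h_V$ is both proper and quasi-finite. A proper morphism with finite fibres between Noetherian schemes is finite (properness gives closed image and affineness of the morphism once fibres are zero-dimensional; equivalently, proper $+$ quasi-finite $=$ finite by EGA or Deligne's formulation of Zariski's Main Theorem). This immediately settles finiteness of $h_V$.

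For surjectivity, I would observe that $\V$ is an affine space, hence irreducible, and the Hitchin base $\H = \bigoplus_{i=1}^{n} H^0(X, K^i)$ is likewise an irreducible affine space. A finite morphism between irreducible varieties is surjective as soon as source and target have the same dimension, because finite morphisms are closed and preserve dimension on each irreducible component, forcing the closed image to be an irreducible closed subvariety of $\H$ of maximal dimension. So it suffices to check that $\dim \V = \dim \H$.

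This is a direct Riemann--Roch calculation. Since $E$ is stable, $H^0(X,\End(E))=\CC$, so by Serre duality $h^1(X,\End(E)\otimes K)=1$, and Riemann--Roch yields
\[
\dim \V = h^0(X,\End(E)\otimes K) = n^2(g-1)+1.
\]
On the other hand, $h^0(X,K)=g$ and for $i\geq 2$ the line bundle $K^i$ has degree $i(2g-2)>2g-2$, so $h^0(X,K^i)=(2i-1)(g-1)$. Summing with $\sum_{i=1}^n(2i-1)=n^2$ gives
\[
\dim \H = g + (g-1)(n^2-1) = n^2(g-1)+1,
\]
which matches $\dim\V$.

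The only real step here is the first one, turning proper $+$ quasi-finite into finite; the rest is bookkeeping. No new geometric input beyond the theorem is needed, and I do not foresee a genuine obstacle — the argument is essentially formal once the dimensions of $\V$ and $\H$ are compared.
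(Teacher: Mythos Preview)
Your proof is correct and follows essentially the same line as the paper's: proper plus quasi-finite gives finite, and surjectivity comes from the image being closed together with the equality $\dim \V = \dim \H$. The paper is terser---it takes the dimension equality as already established in the proof of Theorem~1.1 and re-derives quasi-finiteness from properness (fibers are affine and complete, hence finite)---while you cite quasi-finiteness directly from the theorem and spell out the Riemann--Roch computation explicitly, but the underlying argument is the same.
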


We would like to thank T. Hausel, J. Heinloth, C. Simpson and H. Zelaci for useful discussions on these questions.

\bigskip

\section{Preliminaries}

In this section we recall basic definitions and prove some preliminary results used in the proof of Theorem 1.1.

By \cite{S} section 6 there is an algebraic action of $\CC^*$ on the coarse moduli space $\Higgs_X(n,d)$ given by
multiplying the Higgs field by scalars
$$ \lambda \cdot (E, \phi) \mapsto (E, \lambda \cdot \phi).$$
Clearly the subset $V \subset \Higgs_X(n,d)$ is invariant for the $\CC^*$-action.

\begin{proposition}\label{prop q.f. then vs}
Let $E$ be a stable bundle.  If $h_V$ is quasi-finite, then $E$ is very stable.
\end{proposition}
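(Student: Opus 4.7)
The plan is to prove the contrapositive: if $E$ is stable but \emph{not} very stable, then $h_V$ is not quasi-finite. The key observation is that nilpotent Higgs fields are precisely the fibre of $h_V$ over the origin, together with the fact that nilpotency is preserved by the $\CC^*$-action on $V$.

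First I would unwind the definition of $h_V$. A Higgs field $\phi \in V = \H^0(X, \End(E)\otimes K)$ has a well defined characteristic polynomial whose coefficients $a_i(\phi) \in \H^0(X, K^i)$ are (up to sign) the elementary symmetric functions of the eigenvalues of $\phi$. By definition $h_V(\phi) = (a_1(\phi),\dots,a_n(\phi))$. Saying that $\phi$ is nilpotent means that pointwise its characteristic polynomial equals $t^n$, which is equivalent to $a_1(\phi)=\cdots=a_n(\phi)=0$, i.e.\ to $h_V(\phi) = 0$. Hence
\[
h_V^{-1}(0) = \{\phi \in V \mid \phi \text{ is nilpotent}\}.
\]

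Now assume that $E$ is not very stable. Then by definition there exists a non-zero nilpotent Higgs field $\phi_0 \in V$. Since $\lambda \phi_0$ is again nilpotent for every $\lambda \in \CC$, the entire line $\CC\cdot \phi_0 \subset V$ lies in $h_V^{-1}(0)$. In particular $h_V^{-1}(0)$ is a positive-dimensional, hence infinite, subset of $V$, so $h_V$ is not quasi-finite. This is exactly the contrapositive of the statement, completing the proof.

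There is no real obstacle here: the proposition is essentially a reformulation of the definitions, once one observes that the nilpotent cone in $V$ coincides with $h_V^{-1}(0)$ and is stable under the scaling $\CC^*$-action. The substantive content of Theorem~1.1 will instead lie in the reverse implications (very stability $\Rightarrow$ closedness in $\Higgs_X(n,d)$ $\Rightarrow$ properness of $h_V$), which require the Hironaka argument sketched in the introduction.
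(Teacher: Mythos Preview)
Your proof is correct and follows essentially the same argument as the paper: take a non-zero nilpotent Higgs field $\phi$ on $E$ and observe that the line $\CC\phi$ lies in $h_V^{-1}(0)$, contradicting quasi-finiteness. The paper's version is terser, omitting the explicit identification of $h_V^{-1}(0)$ with the nilpotent locus, but the idea is identical.
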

\begin{proof}
Suppose that $E$ is stable, but not very stable, and let $\phi \in \V$ be a non-zero nilpotent Higgs field.
Then $h_V^{-1}(0)$ contains
the line $\CC \phi$, a contradiction.
\end{proof}

\begin{proposition}\label{prop V closed iff hv proper}
Let $E$ be a stable bundle. Then
$$ \V \textnormal{ \it is closed in } \Higgs_X(n,d) \  \iff \  h_V \textnormal{ \it  is a proper map.}$$
\end{proposition}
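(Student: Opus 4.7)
The strategy is to leverage Nitsure's theorem, which gives properness of the full Hitchin map $h : \Higgs_X(n,d) \to \H$, together with the factorisation $h_V = h \circ \iota$, where $\iota : \V \to \Higgs_X(n,d)$ sends a Higgs field $\phi$ to the isomorphism class of $(E,\phi)$. Because $E$ is stable, $\Aut(E)=\CC^*$ acts trivially on Higgs fields, so $\iota$ is injective; in particular the condition ``$\V$ closed in $\Higgs_X(n,d)$'' is a condition on the image of $\iota$ in the moduli space.

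If $\V$ is closed then $\iota$ is a closed immersion and hence proper, so $h_V = h\circ \iota$ is a composition of proper maps and is itself proper; this settles the forward implication. For the converse, I would invoke the general lemma that if $f = g\circ \iota$ is proper and $g$ is separated then $\iota$ is proper. Concretely, one factors $\iota$ as $\V \xrightarrow{\Gamma_\iota} \V \times_\H \Higgs_X(n,d) \xrightarrow{\pi_2} \Higgs_X(n,d)$; here $\Gamma_\iota$ is a section of the first projection, which is the base change of the separated map $h$ and hence separated, so $\Gamma_\iota$ is a closed immersion, while $\pi_2$ is the base change of the proper map $h_V$ and is therefore proper. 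Applied with $g = h$ (separated because it is proper), the lemma shows $\iota$ is proper. Consequently $\iota(\V)$ is closed in $\Higgs_X(n,d)$, and by the injectivity observation this image coincides with $\V$.

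I do not foresee any real obstacle: the proposition is essentially a formal consequence of Nitsure's theorem and the graph-factorisation lemma, with stability of $E$ entering only through the injectivity of $\iota$. A purely topological variant works equally well: any limit $(F,\psi)$ of points in $\V$ satisfies $h(F,\psi) = \lim h_V(\phi_n)$, so properness of $h_V$ lets us extract a subsequence $\phi_{n_k}\to\phi_\infty \in \V$, and separatedness of the moduli space then identifies $(F,\psi) = (E,\phi_\infty) \in \V$.
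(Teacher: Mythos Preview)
Your proposal is correct and follows essentially the same route as the paper: the paper's proof is a one-sentence appeal to Hartshorne, Corollary~II.4.8~(a),~(b),~(e), applied to the factorisation $h_V = h \circ i_V$ with $h$ proper, and your argument spells out exactly those three ingredients (including the graph-factorisation proof of~(e)). The only cosmetic point is that ``$V$ closed $\Rightarrow$ $\iota$ is a closed immersion'' tacitly uses that $\iota$ is already an immersion, which the paper also takes for granted when it says $V$ ``embeds naturally'' in $\Higgs_X(n,d)$.
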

\begin{proof}
This is a consequence of the valuative criterion of properness applied to the morphism 
$i_V : V \rightarrow \Higgs_X(n,d)$ and its composite $h_V = h \circ i_V$ with the proper map $h$, see e.g.
\cite{Ha} Corollary II.4.8 (a),(b) and (e).
\end{proof}

\begin{proposition}\label{prop M separable}
Let $E$ be a stable bundle and let $C$ be a smooth curve with a morphism 
$$\varphi : C \rightarrow \Higgs_X(n,d),$$
such that $\varphi(C \setminus \{ c \}) \subset V$ for some point $c \in C$. Denote $\varphi(c) = (F, \psi)$. 
If $F\neq E$, then $F$ is not semi-stable.
\end{proposition}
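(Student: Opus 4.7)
The plan is to derive a contradiction from the assumption that $F$ is semi-stable, by exploiting the separatedness of the moduli space $\M_X(n,d)$ of semi-stable bundles. The main ingredients are: (i) the existence of a local lift of $\varphi$ to an honest family of Higgs bundles, (ii) openness of semi-stability in flat families, and (iii) separatedness of $\M_X(n,d)$.

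First I would produce an honest family realizing $\varphi$ in a neighborhood of $c$. Pulling back the tautological family on $V$ gives a family $(E \boxtimes \mathcal{O}_{C^*}, \phi)$ over $C^* \times X$. After perhaps replacing a neighborhood of $c$ in $C$ by an étale cover $\tilde C \to C$, the morphism $\varphi$ lifts to a flat family of Higgs bundles $(\mathcal{E}, \tilde \phi)$ on $\tilde C \times X$; equivalently, one may apply Langton's extension theorem to the above family over $C^*$. By construction $\mathcal{E}_t \cong E$ for $t \in \tilde C^*$, and the special fiber of the extension is a semi-stable representative of the S-equivalence class of $(F,\psi)$, which under the hypothesis that $F$ is semi-stable can be taken to be $F$ itself.

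Next, since $\mathcal{E}_t \cong E$ is stable for $t \neq c$ and $\mathcal{E}_c \cong F$ is semi-stable by assumption, and semi-stability is an open condition in flat families, $\mathcal{E}$ is a flat family of semi-stable bundles, and therefore defines a morphism $\Psi : \tilde C \to \M_X(n,d)$. On the dense open subset $\tilde C^*$ the morphism $\Psi$ is the constant map with value $[E]$. Since $\M_X(n,d)$ is separated and $\tilde C$ is an integral smooth curve, $\Psi$ must be globally constant. In particular $\Psi(c) = [E]$, i.e.\ $F$ is S-equivalent to $E$. But $E$ is stable, so its S-equivalence class consists of the single isomorphism class $\{E\}$; hence $F \cong E$, contradicting the hypothesis $F \neq E$.

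The main obstacle is the first step, namely producing an honest flat family realizing $\varphi$ locally around $c$. Since $\Higgs_X(n,d)$ is only a coarse moduli space this must be done either via étale-local sections of the moduli quotient map or via Langton's theorem, the latter combined with the separatedness of $\Higgs_X(n,d)$ to identify the special fiber of the extended family with $(F,\psi)$ up to S-equivalence. Once this technical point is settled, the rest of the argument is formal.
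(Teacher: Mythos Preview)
Your proposal is correct and follows essentially the same route as the paper: pass to an \'etale cover to obtain an honest family of bundles with generic fiber $E$ and special fiber $F$, then use separatedness of $\M_X(n,d)$ to conclude that the classifying map is constant, contradicting $F\neq E$. The paper's proof is terser (it does not mention Langton or spell out the S-equivalence argument), but the underlying idea is identical.
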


\begin{proof}
Suppose on the contrary that $F$ is semi-stable. By passing to an \'etale cover of $C$ we can assume that there is 
a family of vector bundles $\Ee$ over X parameterized by C such that $\Ee_{|X \times \{ p \}} = E$ for $p \not=c$ and
$\Ee_{|X \times \{ c\}} = F$. Now the classifying map $\varphi'$ associated to $\Ee$ maps $C$ 
to the coarse moduli space $\M_X(n,d)$, which
is separated. Hence $\varphi'$ is constant, which contradicts $F \not= E$.
\end{proof}

The next lemma is probably well-known, but as we have not found a reference in the literature we include a 
full proof.

\begin{lemma} \label{rk zariski closure}
Given a morphism $f : T \rightarrow Y$ to a quasi-projective variety $Y$. Let $U = f(T)$ be the 
image of $f$ in $Y$. Suppose that there exists a point $z \in \overline{U} \setminus {U}$, where $\overline{U}$ is the 
Zariski closure of $U$ in $Y$. Then there exists a smooth (not necessarily complete) curve $C$, a point $c \in C$ and a morphism $\varphi: C \rightarrow \overline{U}$ such that
$$
\varphi( C\setminus \{ c \}) \subset U \ \text{and} \  \varphi(c) = z.
$$
\end{lemma}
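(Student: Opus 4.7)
The plan is to produce $C$ in four steps: reduce to the irreducible case, apply Chevalley's theorem to locate a dense open subset of $\overline{U}$ lying inside $U$, produce an irreducible curve through $z$ not contained in the complementary proper closed subvariety, and finally normalize and restrict.

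First I reduce to $T$ irreducible. Decomposing $T$ into its finitely many irreducible components $T_1, \ldots, T_r$, one has $\overline{U} = \bigcup_i \overline{f(T_i)}$, so $z$ lies in some $\overline{f(T_i)}$. Replacing $T$ by $T_i$ I may assume $T$ is irreducible, so $U = f(T)$ and $\overline{U}$ are irreducible as well. Next, by Chevalley's theorem the image $U$ is constructible in $Y$; a constructible set which is dense in an irreducible variety must contain a nonempty open subset of that variety, so there exists a nonempty open $U_0 \subset \overline{U}$ with $U_0 \subset U$. Set $Z := \overline{U} \setminus U_0$, a proper closed subvariety of $\overline{U}$, and note that $z \in Z$ since $z \not\in U \supset U_0$.

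The main content lies in producing an irreducible curve $C' \subset \overline{U}$ through $z$ which is not contained in $Z$. Embed $\overline{U}$ in a projective space $\PP^N$ and let $\widehat{U}$ be its projective closure, an irreducible projective variety of the same dimension containing $z$. Set $Z' := Z \cup (\widehat{U} \setminus \overline{U})$, still a proper closed subvariety of $\widehat{U}$. If $\dim \widehat{U} = 1$ take $C' = \overline{U}$; otherwise, iteratively slice $\widehat{U}$ with $\dim \widehat{U} - 1$ general hyperplanes through $z$, applying Bertini's theorem at each step to preserve irreducibility, and use a general-position argument to guarantee that the resulting curve is not contained in $Z'$. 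This Bertini / general-position step is the only one that requires real care; the remainder is bookkeeping.

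Finally I normalize and shrink. Let $\widetilde{C'}$ be the normalization of $C'$, with induced morphism $\tilde{\varphi}: \widetilde{C'} \to \overline{U}$. Since $C' \not\subset Z$, the open subset $\tilde{\varphi}^{-1}(U_0) \subset \widetilde{C'}$ is nonempty, hence dense, so its complement is a finite set of points, which I write as $\{c, q_1, \ldots, q_k\}$ with $c$ chosen in $\tilde{\varphi}^{-1}(z)$. The smooth curve $C := \widetilde{C'} \setminus \{q_1, \ldots, q_k\}$ together with the restriction $\varphi := \tilde{\varphi}|_C$ then satisfies $\varphi(c) = z$ and $\varphi(C \setminus \{c\}) \subset U_0 \subset U$, completing the construction.
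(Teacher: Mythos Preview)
Your overall strategy matches the paper's: use Chevalley's theorem to locate an open $U_0 \subset U$ dense in $\overline{U}$, cut out an irreducible curve through $z$ meeting $U_0$, then normalize and discard the finitely many points not landing in $U_0 \cup \{z\}$. The reduction to the irreducible case and the final normalization step are fine, and the key intermediate claim---that an irreducible curve in $\widehat{U}$ through $z$ not contained in $Z'$ exists---is certainly true.

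The gap is in your justification of that claim. Bertini for \emph{hyperplanes} through the fixed point $z$ need not yield irreducible sections: if at the last stage your surface happens to be a cone with vertex $z$ over a curve of degree $\geq 2$, then every hyperplane through $z$ cuts it in a union of several rulings, so no hyperplane section through $z$ is irreducible. You flag this step as the one ``requiring real care,'' but the linear system of hyperplanes through a single point is simply too small for the argument as written. The paper's proof sidesteps this by working instead with the linear system $|\mathcal{J}_{u,z}(m)|$ of degree-$m$ hypersurfaces ($m \geq 2$) through $z$ \emph{and} an auxiliary point $u \in U$: the extra base point $u$ forces every successive slice to meet $U$, and the paper first uses $\dim Z$ such divisors to cut the boundary component $Z \ni z$ down to a finite set (so that $\overline{W}\setminus W$ is finite from the outset) before continuing down to a curve. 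Replacing your hyperplanes by such higher-degree hypersurfaces through $z$ and a chosen $u \in U_0$ repairs your argument.
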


\begin{proof}
By Chevalley's theorem (see e.g. \cite{Ha} Ex II.3.18 and Ex II.3.19) we know that the image $U = f(T)$
is a constructible set, hence a finite disjoint union $U= \bigcup_{i} U_i$ of locally closed subsets $U_i$. Hence 
$z \in \overline{U}_i \setminus U_i$ for some integer $i$. To simplify notation we will write
$U$ instead of $U_i$. Clearly $\dim U \geq 1$.

We choose an embedding of $U \subset \overline{U} \subset Y \hookrightarrow \PP^N$ in projective space. 
We denote by $Z$ the irreducible component of $\overline{U} \setminus U$ which contains $z$. Then
$\delta = \dim Z < \dim U$. We choose a point $u \in U$ and consider the linear system 
$\Gamma = | \mathcal{J}_{u,z}(m) |$ of hypersurfaces in $\PP^N$ of fixed degree $m \geq 2$ through the
two points $u$ and $z$. Our strategy is to cut out a curve through the limit point $z$ by 
intersecting divisors in the linear system $\Gamma$. For that we observe that the base locus 
of $\Gamma$ is reduced to $\{ u, z \}$. Therefore we can choose $\delta$ divisors 
$D_1, D_2, \ldots, D_\delta$ in $\Gamma$ which cut out on $Z$  a finite set of points containing $z$.
We denote by $W$ the intersection $D_1 \cap D_2 \cap \ldots \cap D_\delta \cap U \subset U$. Then 
$W$ is non-empty, since $u \in W$, and $\dim W \geq 1$. Moreover, since $W$ is closed in $U$, we have the
inclusion $\overline{W} \setminus W \subset
D_1 \cap D_2 \cap \ldots \cap D_\delta \cap Z$, which shows that $\overline{W} \setminus W$ is also
a finite set of points containing $z$. If $\dim W > 1$ we intersect $W$ with divisors in $\Gamma$ till
we obtain a curve $C$ passing through $z$. Since $\overline{W} \setminus W$ is a finite set, there is a
neighboorhood $\Omega$ of $z$ in $C$ which does not intersect this finite set, hence $\Omega \setminus \{ z \} $
is contained in $U$. If the curve $C$ has singular points, we take its normalization. 
\end{proof}

\section{Proof of Theorem 1.1}
 
Because of Propositions \ref{prop q.f. then vs} and \ref{prop V closed iff hv proper} it will be 
enough to show that:

$i)$ If $h_V$ is proper, then it is quasi-finite.

$ii)$ $V$ is closed in $\Higgs_X(n,d)$ if $E$ is very stable, or equivalently, that if the
Zariski closure $\ol{\V}$ of $V$ in $\Higgs_X(n,d)$ properly contains $V$, then $E$ admits a non-zero
nilpotent Higgs field.

To prove $i)$, note that $h_V$ is a proper map between affine spaces of the same dimension, and it is thus quasi-finite. 

For $ii)$, assume that there exists $(F,\psi)\in Z:=\overline{V}\setminus V$, where $\overline{V}$ is the Zariski closure of $V$ in
$\Higgs_X(n,d)$. Since $V$ is invariant for the $\CC^*$-action, its Zariski closure $\overline{V}$ is also invariant for the
$\CC^*$-action. Therefore the limit point $(F_0, \psi_0) := \lim_{\lambda \to 0} (F, \lambda \psi) \in \overline{V}$ and 
satisfies $h(F_0, \psi_0) = 0$. By Lemma \ref{rk zariski closure} and Proposition \ref{prop M separable} we deduce that $(F, \psi)$ 
and therefore $(F_0, \psi_0)$ are not semi-stable, so $(F_0, \psi_0) \in Z$.
Hence, replacing $(F, \psi)$ by its limit $(F_0, \psi_0)$ for the $\CC^*$-action, we can assume that $h(F,\psi) = 0$ and that 
$(F, \psi)$ is a fixed point for the $\CC^*$-action.
\bigskip

By Lemma \ref{rk zariski closure} there exists a curve $C$, a point $c \in C$ and a 
morphism $\varphi: C \rightarrow \overline{V}$ such that 
$\varphi( C^*) \subset V$ and $\varphi(c) = (F, \psi)$. Here $C^*$ denotes the curve $C \setminus \{c \}$.
The main idea of the proof is to consider the $\CC^*$-action on the image of the curve $C^*$ in $\Higgs_X(n,d)$.
So we introduce the morphism
$$ \Psi^*: \CC^* \times C^* \longrightarrow \overline{\V} \ \ \text{defined by}  \ \ \Psi^* (\lambda, p) = \lambda \cdot \varphi(p). $$
\begin{proposition}
We can extend the morphism $\Psi^*$ to a morphism 
$$ \Psi : \CC \times C \setminus \{ (0,c) \} \rightarrow \overline{V}, $$
such that $\Psi(0,p) = (E,0)$ for $p \not= c$ and $\Psi(\lambda,c) = (F, \psi)$ for $\lambda \not=0$.
\end{proposition}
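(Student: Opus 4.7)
\smallskip
\noindent\textbf{Proof plan.} The open subsets $U_1 := \CC^* \times C$ and $U_2 := \CC \times C^*$ cover $\CC \times C \setminus \{(0,c)\}$, so my plan is to define morphisms $\Psi_1$ on $U_1$ and $\Psi_2$ on $U_2$, verify that they agree on the overlap $\CC^* \times C^*$, and glue them into the desired $\Psi$.

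On $U_1$ I would simply copy the defining formula of $\Psi^*$, setting $\Psi_1(\lambda,p) := \lambda \cdot \varphi(p)$. This is a morphism $U_1 \to \ol{V}$ because the $\CC^*$-action on $\Higgs_X(n,d)$ is algebraic, it preserves $\ol{V}$, and $\varphi \colon C \to \ol{V}$ is a morphism. Moreover, by the reduction carried out just before the proposition, $(F,\psi) = \varphi(c)$ is a $\CC^*$-fixed point, so $\Psi_1(\lambda,c) = (F,\psi)$ for every $\lambda \in \CC^*$, which is the value required by the statement.

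On $U_2$ I would exploit the linear structure of $V$. Since $\varphi(C^*) \subset V$ and the vector space $H^0(X, \End(E) \otimes K)$ embeds as a locally closed subvariety of $\Higgs_X(n,d)$ with image $V$, the restriction $\varphi|_{C^*}$ lifts to an algebraic map $p \mapsto \phi_p$ into this vector space. I would then set
$$
\Psi_2(\lambda,p) \;:=\; (E,\, \lambda\, \phi_p) \;\in\; V \;\subset\; \ol{V}.
$$
Since scalar multiplication in $H^0(X,\End(E)\otimes K)$ and the embedding $V \hookrightarrow \ol{V}$ are both morphisms, so is $\Psi_2$, and it gives $\Psi_2(0,p) = (E,0)$, the required boundary value on $\{0\} \times C^*$.

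On the overlap $\CC^* \times C^*$ both formulas evaluate to $\lambda \cdot (E,\phi_p) = (E,\lambda\,\phi_p)$, so $\Psi_1$ and $\Psi_2$ agree and glue to a single morphism $\Psi \colon \CC \times C \setminus \{(0,c)\} \to \ol{V}$ with the stated boundary behaviour. The main obstacle I anticipate lies in the second step: one must know that the classifying map $H^0(X, \End(E) \otimes K) \to \Higgs_X(n,d)$ really is a locally closed immersion of schemes with image $V$, so that an algebraic map $C^* \to V$ literally comes from an algebraic family of Higgs fields on the fixed bundle $E$. This is where the stability of $E$ enters crucially: $\Aut(E) = \CC^*$ acts trivially on Higgs fields, making the classifying map injective, and combined with a tangent-space computation one gets a scheme-theoretic immersion, thereby legitimizing the lift $p\mapsto\phi_p$ used in the definition of $\Psi_2$.
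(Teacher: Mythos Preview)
Your proof is correct and follows essentially the same approach as the paper: both cover $\CC \times C \setminus \{(0,c)\}$ by $\CC^* \times C$ and $\CC \times C^*$, use the $\CC^*$-action together with the fixed-point property of $(F,\psi)$ on the first piece, and use that the $\CC^*$-action on $V$ is scalar multiplication in a vector space (hence extends to $\lambda=0$) on the second. You are simply more explicit than the paper about why the map $C^* \to V$ lifts to an honest algebraic map into the vector space $H^0(X,\End(E)\otimes K)$, a point the paper takes for granted via its standing identification of $V$ with that vector space.
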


\begin{proof}
It will be enough to extend $\Psi^*$ to the two open subsets $\CC \times C^*$ and $\CC^* \times C$ of $\CC \times C$.
Note that the $\CC^*$-action $\CC^* \times V \rightarrow V$ is the scalar multiplication of the vector space $V$ and
hence naturally extends to an action $\CC \times V \rightarrow V$. Since $\varphi(C^*) \subset V$, we therefore obtain
a morphism $\Psi : \CC \times C^* \rightarrow V$. Clearly, $\Psi(0,p) = (E,0)$ for $p \not= c$. On the open subset
$\CC^* \times C$, we just take the definition of $\Psi^*$ extended to $C$. Clearly, $\Psi(\lambda, c) = \lambda \cdot (F, \psi) 
= (F, \psi)$ for $\lambda \not=0$, as $(F, \psi)$ is by assumption a fixed point.
\end{proof}

So $\Psi$ is a rational map from the surface $S := \CC \times C$ to $\Higgs_X(n,d)$ whose 
indeterminacy locus is the point $(0,c)$. First, we consider
the rational composite map
$$h' =  h \circ \Psi : S \rightarrow \H.$$
Since $\H$ is a vector space, the  morphism $h'$ is given by holomorphic functions on a punctured smooth surface. By 
Hartog's theorem these functions extend to the surface $S$ and by continuity $h'(0, c) = 0$ since
$h'(0, p) = 0$ for $p \not= c$.

In order to prove that the rational map $\Psi$ can be resolved into a morphism, we will apply 
Main Theorem II in \cite{H}. For it to apply, by the discussion following Question E in loc. cit. (page 140), it is 
enough to prove that the morphism $\pi$ defined in the following commutative diagram is proper

$$
\xymatrix{
\Gamma\ar[dr]_\pi\ar@{^(->}[r]&S \times \Higgs_X(n,d)\ar[d]^{\pi_1}\\
&S.}
$$
Here $\Gamma$ is the closure of the graph of $\Psi$.
Let $R$ be a discrete valuation ring with quotient field $L$. By the valuative criterion of properness, we need to prove 
that for any commutative diagram as below, the dashed arrow exists  
$$
\xymatrix{
\Spec(L)\ar[d]\ar[r]&\Gamma\ar[d]_\pi\\
\Spec(R)\ar@{-->}[ur]\ar[r]& S.
}
$$
For each such commutative diagram, consider the extended commutative diagram 
\begin{equation}\label{eq extend Psi}
\xymatrix{
\Spec(L)\ar[d]\ar[r]&\Gamma\ar[d]_\pi\ar@{^(->}[r]&S \times \Higgs_X(n,d)\ar[d]_{Id \times h}\\
\Spec(R)\ar[r]\ar@{-->}[ur]_{e_1}\ar@{-->}[urr]_{e_2}&S \ar[r]_{Id \times h'}& S \times \H,
}
\end{equation}
where the morphism $h' : S \rightarrow \H$ was introduced above. By properness of the Hitchin map $h$, the map $Id \times h$ is
also proper and therefore the dashed arrow $e_2$ in \eqref{eq extend Psi}  exists. Moreover, since $\Gamma$ is closed, its image is contained in $\Gamma$, so the dashed arrow $e_1$ also exists. So, by \cite{H} Main Theorem II, this proves that $\Psi$ 
resolves after a finite sequence of 
blow-ups along points to a morphism $$\hat{\Psi} : \hat{S} \rightarrow \ol{V}.$$

First note that the exceptional divisor $D:=\bigcup_{i=0}^m D_i$ is a connected union of projective lines $D_i$ and that
by restriction of $\hat{\Psi}$ we obtain a morphism
$$
\hat{\Psi}: D:=\bigcup_{i=0}^mD_i \longrightarrow \ol{V}\subset\Higgs_X(n,d)
$$
whose image is a connected curve in $\ol{V}$. Let $p_0 \in D$ and $p_\infty \in D$ be the limit points in $\hat{S}$
$$ p_0 = \lim_{p \to c} (0,p) \ \text{and} \ p_\infty = \lim_{\lambda \to 0} (\lambda, c).$$
Then, by separability of the moduli space $\Higgs_X(n,d)$ \cite[Theorem 5.10]{Nitin} (see also Remark 5.12 in loc.cit.), we clearly have
$$ \hat{\Psi}(p_0)=(E,0) \ \text{and} \   \hat{\Psi}(p_\infty)=(F,\psi).$$
Moreover, since $h'(0,c) = 0$ we have $\hat{\Psi}(D)\subset h^{-1}(0)$. 

We can numerate the projective lines $D_0, D_1, \dots, D_{m'}$ such that $p_0 \in D_0$ and $p_\infty \in D_{m'}$
and $D_i \cap D_{i+1} \not= \emptyset$ for all $i \leq m'-1$. Note that a priori $m'$ can be smaller than $m$.
Consider the smallest integer $i_0 \geq 0$ such that $\hat{\Psi}(D_{i_0})$ is not reduced to the point
$(E, 0)$. Such an integer exists since $\hat{\Psi}(p_\infty) = (F, \psi) \not= (E, 0)$. Then we claim that
$\hat{\Psi}(D_{i_0}) \cap V$ contains a Higgs bundle $(E, \phi_0)$ with non-zero Higgs field $\phi_0$.
Indeed, by definition of $i_0$ there exists a point $p \in D_{i_0}$ such that $\hat{\Psi}(p) = (E,0)$.
Suppose on the contrary that for any point $q \not=p$ we have $\hat{\Psi}(q) \in \overline{V} \setminus V$. Then by 
Lemma \ref{rk zariski closure} and Proposition \ref{prop M separable} the underlying bundle of $\hat{\Psi}(q)$
is not semi-stable for all $q \not=p$. This contradicts the fact that the non-empty locus of stable
bundles of the family parameterized by $D_{i_0}$ is an open subset.

Since $\hat{\Psi}(D_{i_0}) \subset h^{-1}(0)$ the Higgs field $\phi_0$ is nilpotent showing that $E$ is not 
very stable.

\section{Proof of Corollary 1.2}
If $E$ is a very stable bundle, then by Theorem 1.1 the map $h_V$ is a proper map between affine spaces. Hence the fibers of $h_V$ are affine and complete, so $h_V$ is a quasi-finite map.
But $h_V$ proper and quasi-finite implies that $h_V$ is a finite map. Surjectivity of $h_V$ follows again from properness.

\end{document}